
\documentclass{birkjour}

%
%
%

 \theoremstyle{definition}
 
 \theoremstyle{remark}

 \numberwithin{equation}{section}
 \theoremstyle{definition}
 \newtheorem{definition}{Definition}[section]
 \newtheorem{theorem}{Theorem}
 \newtheorem{lemma}[theorem]{Lemma}
 \theoremstyle{remark} 
 \newtheorem{remark}{Remark}
 \newtheorem{example}{Example}
 \newtheorem{corollary}[theorem]{Corollary}

 \usepackage{graphicx}
\usepackage{amsmath}
\usepackage[english]{babel}  
\usepackage{microtype}       

\begin{document}

%
%
%
%
%
%
%
%
%

\title[Approximating fixed point by Krasnoselskij iteration]
 {Approximating the common fixed point of enriched interpolative  matkowski type mapping in Banach space}

\author{Akansha Tyagi}

\address{Department of Mathematics, Faculty of Mathematical Sciences
Guru Tegh Bahadur Road, University of Delhi, Delhi 110007, India}
\email{akanshatyagi0107@gmail.com ,atyagi1@maths.du.ac.in}

\author{ Sachin Vashistha}
\address{Department of Mathematics, Hindu College, University of Delhi,Delhi 110007, India}
\email{sachin.vashistha1@gmail.com}
\subjclass{ Secondary 47H10,
54H25}
\keywords{Fixed Point,enriched Interpolative Matkowski type mapping,
complete normed linear space,Krasnoselskij iteration}
\date{today}

\begin{abstract}
In the  Normed space theory,
the existence of fixed points  is  
one of the main tools in improving efficiency of iterative algorithms in optimization, numerical analysis and various mathematical applications .This study introduces and investigates a recent concept termed "Enriched interpolative  Matkowski-type mapping". Building upon the well-established foundation of Matkowski-type contractions  .This extension incorporates an interpolative enrichment mechanism, yielding a refined framework for analyzing contraction mappings. The proposed concept is motivated by the desire to enhance the convergence behavior and applicability of contraction mapping principles in various mathematical and scientific domains.
\end{abstract}

\maketitle

\section{Introduction}
Fixed point theory is a basic concept that finds applications across many mathematical areas .The Banach Contraction Principle \cite{bib5} stands as a foundational theorem, renowned for its capacity to ensure both the existence and uniqueness of fixed points within the domain of contraction mappings residing in a metric space and extended to complete metric spaces by Caccioppoli 
\cite{bib6} After that Kannan established a fixed point theorem \cite{bib7,bib8} which shows that a discontinuous map can also have a fixed point.
Karapinar established the generalized Kannan type contraction and started the interpolative technique
\cite{bib9}.
Numerous Researcher have contributed to the development of interpolative fixed-point theory, including those who studied interpolative Ćirić-Reich-Rus\cite{bib11} , Hardy rogers type contraction \cite{bib12}.Additional current research on interpolative fixed-point outcomes can be found in \cite{bib13,bib14,bib15,bib16}.
Interpolative Boyd-Wong type contraction and Matkowski type contraction were concepts that were first introduced by Karpinar et al. \cite{bib17} in 2020. \\
Contrarily, in two  papers \cite{bib18,bib19}, the first author introduced the technique of enriching nonexpansive mappings. This method was later successfully applied to the class of strictly pseudo-contractive mappings \cite{bib19} .
These findings serve as the foundation for this article's goal, which is to use the technique of enriching interpolative matkowski type mappings to the class of interpolative matkowski  mappings to produce enriched interpolative matkowski mappings. We establish a fixed point theorem for this new class of mappings and demonstrate that, as opposed to the Picard iteration, a suitable Krasnoselskij iteration can be used to approximate their fixed points.\\
Motivated by recent studies, this paper aims to define enriched interpolative Matkowski mappings for complete normed linear spaces. Subsequently, we will present a proof establishing the existence of  fixed point for this type of mapping. Within the scope of our paper, we will demonstrate that interpolative Matkowski mapping is a particular case of enriched interpolative Matkowski mapping.
In the end, we will give an application of enriched interpolative Matkowski mapping in split convex feasibility problem.\\
\section{Preliminaries}
\begin{definition} \cite{bib1,bib28} 
Let $\mathrm{Z}$ be a collection of functions $\zeta \colon [0, \infty) \rightarrow [0, \infty)$ that satisfy the condition.
    \item $\zeta$ is non-decreasing,
    \item $\lim_{n \to \infty} \zeta^n(t) = 0$ for all $t > 0$.
\end {definition}

\begin{definition}\cite{bib4}
Let $(G, \|\cdot\|)$ be a complete normed linear space. A self-mapping $R: G \to G$ is called an interpolative  Matkowski mapping if there exist $a, b, c \in (0, 1)$ satisfying $a + b + c < 1$  and $\zeta \in \mathrm{Z}$ such that\\
\begin{equation}\label{2}
\begin{aligned}
\|  Rp - Rq \| 
\leq &\zeta\bigg[ \| p - q\|^b \|p - Rp\|^a \|q - Rq\|^c\\
& \bigg( \frac{1}{2} ( \| p- Rq\| + \|  q - Rp\|)  \bigg) ^{1-a-b-c}\bigg] \\
 \forall p, q \in G \setminus \text{Fix}(R)
\end{aligned}
\end{equation} 
\end{definition}
\begin{lemma} {2.1}  \label{sec:lemma} \cite{bib2, bib3}  Let $\zeta \in \mathrm{Z}$ Then $\zeta(t) < t$ for all $t > 0$ and $\zeta(0) = 0$.
\end{lemma}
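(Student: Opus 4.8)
The plan is to argue both claims by contradiction, exploiting the interplay between the monotonicity of $\zeta$ and the forced decay of its iterates. I would begin with the inequality $\zeta(t) < t$ for $t > 0$. Suppose, to the contrary, that $\zeta(t_0) \geq t_0$ for some fixed $t_0 > 0$. Because $\zeta$ is non-decreasing, applying $\zeta$ to both sides of $\zeta(t_0) \geq t_0$ yields $\zeta^2(t_0) = \zeta(\zeta(t_0)) \geq \zeta(t_0) \geq t_0$, and an easy induction then gives $\zeta^n(t_0) \geq t_0$ for every $n$. This lower bound is incompatible with the hypothesis $\lim_{n\to\infty}\zeta^n(t_0) = 0$, since $t_0 > 0$; the contradiction establishes $\zeta(t) < t$ for all $t > 0$.

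For the boundary value $\zeta(0) = 0$, the subtlety is that the decay hypothesis is only assumed for $t > 0$, so the equality cannot be read off directly from a limit condition at $0$. I would instead set $c := \zeta(0)$, note that $c \geq 0$ since $\zeta$ maps into $[0,\infty)$, and assume for contradiction that $c > 0$. Considering the iterate sequence $a_n := \zeta^n(0)$, monotonicity gives $a_1 = \zeta(0) = c > 0 = a_0$, whence $a_{n+1} = \zeta(a_n) \geq \zeta(a_{n-1}) = a_n$ by induction, so $(a_n)$ is non-decreasing and bounded below by $c$. On the other hand, applying the decay hypothesis at the admissible point $t = c > 0$ gives $\lim_{n\to\infty} \zeta^n(c) = 0$; since $\zeta^n(c) = \zeta^{n+1}(0) = a_{n+1}$, this forces $a_n \to 0$, contradicting $a_n \geq c > 0$. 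Therefore $c = 0$, that is, $\zeta(0) = 0$.

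I do not anticipate a genuine obstacle here, as both parts reduce to the same monotone-iteration mechanism driven by condition (2) in the definition of $\mathrm{Z}$. The only point requiring care is the second claim, where the hypothesis is stated only for strictly positive arguments, so one must route the argument through the positive value $c = \zeta(0)$ rather than attempting to invoke the decay condition at $0$ itself.
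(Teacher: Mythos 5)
Your proposal is correct. Note, however, that the paper itself offers no proof of Lemma 2.1 at all --- it is stated as a known result with citations to \cite{bib2,bib3} --- so there is no in-paper argument to compare against; the relevant comparison is with the classical proof in those sources. Your first part coincides exactly with that standard argument: if $\zeta(t_0)\geq t_0$ for some $t_0>0$, monotonicity propagates the bound to $\zeta^n(t_0)\geq t_0$ for all $n$, contradicting $\zeta^n(t_0)\to 0$. For the second part your iteration through $c=\zeta(0)$ is valid and your caution about the decay hypothesis being stated only for $t>0$ is well placed, but the detour through the sequence $a_n=\zeta^n(0)$ is heavier than necessary: once $\zeta(t)<t$ is established for all $t>0$, monotonicity gives $0\leq\zeta(0)\leq\zeta(t)<t$ for every $t>0$, and letting $t\to 0^{+}$ forces $\zeta(0)=0$ in one line, with no second appeal to the iterate condition. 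Both routes are sound; yours buys nothing extra here except self-containment of the contradiction mechanism, while the monotonicity squeeze is the more economical argument usually given in the literature.
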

\begin{remark}
 \cite{bib24}
Considering a self-mapping $R$ on a linear space $G$, then for any $\lambda \in (0, 1)$, the so-called averaged mapping $R_\lambda$ given by
\begin{equation}\label{2.4}
R_\lambda p = (1 - \lambda)p + \lambda Rp, \quad \forall p \in G,
\end{equation}
has the property that $\text{Fix}(R_\lambda) = \text{Fix}(R)$.\\
\end{remark}

\section{Approximating fixed points of enriched  interpolative matkowski mapping}

\begin{definition}
Let $(G, \|\cdot\|)$ be a complete normed linear space. A self-mapping $R: G \to G$ is called an enriched interpolative  Matkowski mapping if there exist $a, b, c \in (0, 1)$ satisfying $a + b + c < 1$ , $k \in [0, +\infty)$, and $\zeta \in \mathrm{Z}$ such that\\

	\begin{equation}
		\begin{aligned}
			\| & k(p-q) + Rp - Rq \| \leq \zeta \Bigg[ \|(k+1)(p - q)\|^b \|p - Rp\|^a \|q - Rq\|^c \\
			&\left(\frac{1}{2} \left( \| (k+1)(p-q) + q - Rq\| + \| (k+1)(q-p) + p - Rp\| \right)\right)^{1-a-b-c} \Bigg]\\
			& \forall p, q \in G \setminus \text{Fix}(R)\\
			\end{aligned}
	\end{equation}
	\end{definition}
\begin{theorem}
Let $(G, \| \cdot \|)$ be a complete normed linear space and $R : G \rightarrow G$ an enriched interpolative  Matkowski mapping (3.1). Then
\begin{enumerate}
\item The mapping R has a  fixed point denoted as $s$.
\item For any given initial point $p_0$ belonging to the set $G$ there exists a real number $\lambda$ within the interval $(0, 1]$ such that the iterative scheme $\{p_m\}_{m=0}^\infty$, defined by the recurrence relation

\begin{equation}\label{4}
p_{m+1} = (1 - \lambda)p_m + \lambda Rp_m, \quad m \geq 0
\end{equation}

    converges to $s$, for any $p_0 \in G$.
    \end{enumerate}
\end{theorem}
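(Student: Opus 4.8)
The plan is to reduce the enriched problem to an ordinary (non-enriched) interpolative Matkowski mapping via the averaging device of the Remark, and then exploit the fact that the Krasnoselskij iteration \eqref{4} for $R$ is nothing but the Picard iteration of the averaged map. Concretely, I would set $\lambda = \frac{1}{k+1}\in(0,1]$ (so $\lambda=1$ exactly when $k=0$, where the enriched condition degenerates to the ordinary one) and work with $R_\lambda p = (1-\lambda)p+\lambda Rp$. Since $\lambda(k+1)=1$, a direct computation gives the identities
\begin{align*}
R_\lambda p - R_\lambda q &= \lambda\big(k(p-q)+Rp-Rq\big), \\
p - R_\lambda p &= \lambda(p-Rp), \qquad q - R_\lambda q = \lambda(q-Rq), \\
p - R_\lambda q &= \lambda\big((k+1)(p-q)+q-Rq\big), \\
q - R_\lambda p &= \lambda\big((k+1)(q-p)+p-Rp\big).
\end{align*}
These are precisely the blocks appearing, up to a factor of $\lambda$, on both sides of the defining inequality (3.1).

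The key step is to feed these identities into (3.1). Each factor in the argument of $\zeta$ there is a power of $\lambda$ times the corresponding factor built from $R_\lambda$: indeed $\|(k+1)(p-q)\|^b=\lambda^{-b}\|p-q\|^b$, $\|p-Rp\|^a=\lambda^{-a}\|p-R_\lambda p\|^a$, $\|q-Rq\|^c=\lambda^{-c}\|q-R_\lambda q\|^c$, and the two terms inside the average equal $\lambda^{-1}\|p-R_\lambda q\|$ and $\lambda^{-1}\|q-R_\lambda p\|$. Since $-b-a-c-(1-a-b-c)=-1$, the whole argument of $\zeta$ in (3.1) equals $M/\lambda$, where
\[
M := \|p-q\|^b\,\|p-R_\lambda p\|^a\,\|q-R_\lambda q\|^c\Big(\tfrac12\big(\|p-R_\lambda q\|+\|q-R_\lambda p\|\big)\Big)^{1-a-b-c}
\]
is precisely the interpolative Matkowski expression for the averaged map. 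Combined with $\|R_\lambda p - R_\lambda q\| = \lambda\|k(p-q)+Rp-Rq\|$, the inequality (3.1) yields
\[
\|R_\lambda p - R_\lambda q\| \le \lambda\,\zeta\!\left(\frac{M}{\lambda}\right) =: \psi(M), \qquad p,q \in G\setminus\mathrm{Fix}(R_\lambda),
\]
with $\psi(t)=\lambda\,\zeta(t/\lambda)$. One checks at once that $\psi$ is non-decreasing and $\psi^n(t)=\lambda\,\zeta^n(t/\lambda)\to 0$, so $\psi\in\mathrm{Z}$; moreover $\mathrm{Fix}(R_\lambda)=\mathrm{Fix}(R)$ by the Remark, so the excluded set is the correct one. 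Hence $R_\lambda$ is an ordinary interpolative Matkowski mapping in the sense of \eqref{2}, with the same exponents $a,b,c$ and comparison function $\psi$.

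With this reduction the two conclusions follow from the Picard theory of interpolative Matkowski maps applied to $S:=R_\lambda$, noting that $p_{m+1}=Sp_m$ is exactly the scheme \eqref{4}. First I would show the step sizes $d_m:=\|p_m-p_{m+1}\|$ decrease strictly to $0$: substituting $p=p_{m-1}$, $q=p_m$ collapses $M$ to $d_{m-1}^{a+b}\,d_m^{c}\,\big(\tfrac12\|p_{m-1}-p_{m+1}\|\big)^{1-a-b-c}$, and if $d_m\ge d_{m-1}$ this is $\le d_m$, forcing $d_m\le\psi(d_m)<d_m$ by Lemma 2.1 applied to $\psi$ — a contradiction. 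Thus $d_m<d_{m-1}$, the argument is $\le d_{m-1}$, and $d_m\le\psi(d_{m-1})\le\psi^{m}(d_0)\to 0$. Once $\{p_m\}$ is shown to converge, its limit $s$ is a fixed point of $S$, hence of $R$ by the Remark, giving conclusion (1); and $p_m\to s$ for arbitrary $p_0$ is conclusion (2).

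The one genuinely delicate step — and the part I would write out in full — is showing that $\{p_m\}$ is a Cauchy sequence. In the classical Matkowski setting this rests on the trapping estimate built from $\psi(\varepsilon)<\varepsilon$, but here the interpolative right-hand side does not reduce to $\psi(\|p_n-p_m\|)$, so that argument cannot be applied verbatim. The way around it is to exploit that the factors $\|p_n-Sp_n\|^{a}=d_n^{a}$ and $\|p_m-Sp_m\|^{c}=d_m^{c}$ occurring in the interpolative bound already tend to $0$; this keeps the argument of $\zeta$ small even before $\|p_n-p_m\|$ is under control, and together with the monotonicity of $\psi$ and the triangle inequality it should confine the tail of the orbit to an arbitrarily small ball. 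Making this estimate quantitative is where the real work lies; the algebraic reduction of the first two paragraphs, by contrast, is routine.
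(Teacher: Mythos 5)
Your reduction is precisely the paper's: with $\lambda=\frac{1}{k+1}$, the identities you list are the ones the paper uses to pass from (3.1) to its inequality (3.3) for the averaged map $R_\lambda$, and your packaging of the right-hand side as $\psi(M)$ with $\psi(t)=\lambda\,\zeta(t/\lambda)$, together with the verification $\psi^n(t)=\lambda\,\zeta^n(t/\lambda)\to 0$, is a cleaner statement of what the paper does implicitly when it iterates $\|p_{m+1}-p_m\|\le\lambda\,\zeta\big(\tfrac{1}{\lambda}\|p_m-p_{m-1}\|\big)$ down to $\lambda\,\zeta^{m}\big(\tfrac{1}{\lambda}\|p_1-p_0\|\big)$. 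Your monotone-steps argument (contradiction from $d_m\ge d_{m-1}$ via Lemma 2.1 applied to $\psi$) is also the paper's argument verbatim, up to swapping the roles of $p$ and $q$. The case $k=0$, $\lambda=1$ is handled the same way in both.

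The genuine gap is the one you flagged yourself: you never actually prove that $\{p_m\}$ is Cauchy, and you also assert without argument that the limit $s$ is a fixed point of $R_\lambda$ --- since $R_\lambda$ is not assumed continuous, this last step needs the contractive inequality (the paper takes $p=p_m$, $q=s$, uses $\zeta(t)<t$, and lets $m\to\infty$; the factor $\|p_m-s\|^{b}\to 0$ forces $\|s-R_\lambda s\|=0$). For Cauchyness the paper runs an $\varepsilon$-induction: choose $m_0$ with $\|p_m-p_{m+1}\|<\varepsilon$ for $m\ge m_0$, then show inductively $\|p_m-p_{k+1}\|\le\|p_m-p_{m+1}\|+\lambda\,\zeta(\varepsilon/\lambda)\le 2\varepsilon$. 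Your suspicion that the classical trapping ``cannot be applied verbatim'' is well founded --- indeed the paper's own induction is loose at exactly this point: the quantity $\|p_m-p_{k+1}\|$ being estimated reappears inside the averaged term, and $\|p_m-p_k\|$ is only controlled up to $2\varepsilon$, so the geometric mean is bounded by $C\varepsilon$ with $C=2^{b}3^{1-a-b-c}$ possibly exceeding $1$, and $\zeta(C\varepsilon)\le\varepsilon$ is not justified by $\zeta\in\mathrm{Z}$ alone. The mechanism you propose --- that the vanishing factors $d_n^{a}$, $d_k^{c}$ force the argument of $\zeta$ to $0$ --- does work and would in fact repair this: if $\{p_m\}$ were not Cauchy one picks $\varepsilon_0>0$ and indices $n_k<m_k$ (with $m_k$ minimal) so that $\|p_{n_k}-p_{m_k}\|\to\varepsilon_0$; then the argument of $\zeta$ for the pair $(p_{n_k},p_{m_k})$ tends to $0$ because $d_{n_k}^{a}\to 0$ while the other factors stay bounded, whence $\|p_{n_k+1}-p_{m_k+1}\|\le\psi(M_k)<M_k\to 0$, contradicting $\|p_{n_k+1}-p_{m_k+1}\|\ge\|p_{n_k}-p_{m_k}\|-d_{n_k}-d_{m_k}\to\varepsilon_0$. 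But as submitted --- with this step, the heart of the convergence proof, left as a declared \emph{to-do} --- the proposal is incomplete.
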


\begin{proof}
If $k > 0$ in (3.1), We can choose $\lambda$ to be $\frac{1}{k + 1}$. It's evident that \\ $0 < \lambda < 1$ \\
  Consequently, equation (3.1) transforms into.
\[
\begin{aligned}
	\|& \bigg(\frac{1}{\lambda}-1 \bigg)(p-q) + Rp - Rq \| 
	\leq \zeta\Bigg[ \| \bigg(\frac{1}{\lambda}\bigg)(p - q)\|^b \|p - Rp\|^a \|q - Rq\|^c \\
	& \bigg(\frac{1}{2} (\| \bigg(\frac{1}{\lambda}\bigg)(p-q) + q - Rq\| + \| \bigg(\frac{1}{\lambda}\bigg)(q-p) + p - Rp\|)\bigg )^{1-a-b-c} \Bigg]
\end{aligned}
\]

\[
\begin{aligned}
\| (1-\lambda)(p-&q) +\lambda( Rp - Rq) \| 
\leq \lambda\zeta\bigg[ \bigg(\frac{1}{\lambda}\bigg)^{a+b+c}\|(p - q)\|^b \|\lambda(p - Rp)\|^a\\\|\lambda(q - Rq)\|^c 
& \bigg( \frac{1}{2} ( \| (1/\lambda)(p-q) + q - Rq\| + \| (1/\lambda)(q-p) + p - Rp\|)\bigg)   ^{1-a-b-c}\bigg]
\end{aligned}
\]\\

\[
\begin{aligned}
\|(1-\lambda)p +& \lambda Rp - (1-\lambda)q - \lambda Rq\| 
\leq \lambda\zeta\bigg[\bigg(\frac{1}{\lambda}\bigg)\| (p - q) \|^b\\
&\|(p - (1-\lambda)p - \lambda Rp)\|^a
\| (q - (1-\lambda)q - \lambda R(q))\|^c \\
&\bigg(\frac{1}{2}(\|p - (1-\lambda)q - \lambda R(q)\| + 
  \|q - (1-\lambda)p - \lambda R(p)\|\bigg)^{1-a-b-c}\bigg]
\end{aligned}
\]\\
\begin{equation}\label{3.3}
\begin{aligned}
\|((R_{\lambda} (p)- & R_{\lambda }(q))\|
\leq \lambda\zeta\bigg( \| \bigg(\frac{1}{\lambda}\bigg)\|(p - q)\|^{b} \|p-R_\lambda(p)\|^{a}
\|q -R_\lambda (q))\|^{c}\\
&\bigg( \frac{1}{2} ( \| p- R_\lambda(q)\| + 
\|(q - R_\lambda(p))\|)  \bigg) ^{1-a-b-c}\bigg]
\end{aligned}
\end{equation}

In accordance with  (2.2), the iterative sequence denoted as $\{p_m\}_{m=0}^\infty$, as defined by equation (3.2), is recognized as the Picard iteration associated with
$R_\lambda$, that is,
\[p_{m+1} = R_\lambda p_m, \quad m \geq 0, \quad m \in \mathbb{N}\]

Let $p_0 \in G$ be such that $p_m = R^m p_0$ for all $m \geq 0$. If $p_m = p_{m+1}$ for some $m$, then the conclusion remains valid.\\
If $p_m \neq p_{m+1}$, from (3.3), we have \\

Take $p = p_m$ and $q = p_{m-1}$ in equation (3.3) to get 
\begin{equation}\label{3.4}
\begin{aligned}
0 < \|((p_{m+1} - p_m))\|&
\leq \lambda\zeta\bigg[  \left(\frac{1}{\lambda}\right)\|(p_m - p_{m-1})\|^{b+c} \|p_m - p_{m+1}\|^a\\
&\bigg( \frac{1}{2} ( \|p_{m-1} - p_m\| + \|p_m- p_{m+1}\|)  \bigg) ^{1-a-b-c}\bigg]\ , m \geq 1
\end{aligned}
\end{equation}

Now we will show that $\|p_{m+1} - p_m\| \leq \|p_m- p_{m-1}\|$, $m \geq 1$. By contradiction, assume that \\$\|p_m - p_{m-1}\| \leq \|p_{m+1} - p_m\|$ for some $m$. \\
Putting this into (3.4) we have

$\|p_{m+1} - p_m\| \leq \lambda \zeta ( \left(\frac{1}{\lambda}\right) \|p_{m+1} - p_m\| )$\\

since $\zeta$ is an increasing function.
and also $\zeta(t) < t$ for all $t > 0$
,\\  so we have $\|p_{m+1} - p_m\| < \lambda \left(\frac{1}{\lambda}\right) \|p_{m+1} - p_m\|$\\
$\|p_{m+1} - p_m\| < \|p_{m+1} - p_m\|$ \\
Hence, we arrive at a contradiction. Therefore, the sequence $\|p_m - p_{m-1}\|$ is decreasing. Hence sequence $\|p_m - p_{m-1}\|$ is convergent .
 So from (3.4), we have

 \begin{equation*}
\begin{aligned}
\|p_{m+1} & - p_m\| \leq \lambda \zeta \bigg[ \left(\frac{1}{\lambda}\right) \|p_m - p_{m-1}\|^{b + c+ a }\|p_m - p_{m-1}\|^{1-a-b-c} \bigg]
\end{aligned}
\end{equation*}
\begin{equation*}
\begin{aligned}
\|p_{m+1} & - p_m\| \leq \lambda \zeta (\left(\frac{1}{\lambda}\right)\|p_m - p_{m-1}\|)
\end{aligned}
\end{equation*}
Similarly,
\begin{equation*}
\begin{aligned}
\|p_{m} & - p_{m-1}\| \leq \lambda \zeta (\left(\frac{1}{\lambda}\right)\|p_{m-1} - p_{m-2}\|)
\end{aligned}
\end{equation*}

On repeating the same argument, we get
\begin{equation*}
\begin{aligned}
\|p_{m+1} & - p_m\| \leq \lambda \zeta^{2}(\left(\frac{1}{\lambda}\right)\|p_{m-1} - p_{m-2}\|),m \geq 1
\end{aligned}
\end{equation*}\\
\begin{equation*}
\begin{aligned}
\|p_{m+1} & - p_m\| \leq \lambda \zeta^{m}(\left(\frac{1}{\lambda}\right)\|p_1 - p_{0}\|),m \geq 1
\end{aligned}
\end{equation*}\\
As $\zeta \in \mathrm{Z}$ and also we have  $\lim_{m\to\infty} \zeta^m(t) = 0$ for all $t > 0$\\ So it will imply that $\lim_{m\to\infty} \|p_{m-1} - p_m\| = 0$\
now our claim to prove that $\{p_m\}$ is a Cauchy sequence in $G$.
Let $\epsilon > 0$ be given. 
then there will exists $m_0 \in \mathbb{N}$ such that for all $m \geq m_0$ we have
\begin{equation}\label{eq:3.5}
\|p_m- p_{m+1}\| <  \epsilon
\end{equation} where
\begin{equation}\label{eq:3.6}
0 < \|p_m - p_{m +1}\| < \epsilon.
\end{equation}
Now state that  for all $n \geq m \geq m_0$:
We will utilize mathematical induction as our method of proof. We start by assuming the statement is valid when $n =k$. Now, let's examine equation (3.6) for 
$n=k+1$
We assume that it is true for $n = k$. Consider \eqref{eq:3.6} for $n = k + 1$.
For $k \geq n \geq m_0$ and using the induction hypothesis:
\begin{align*}
	\|p_m - p_{k+1}\| &\leq \|p_m - p_{m+1}\| + \|p_{m+1} - p_{k+1}\| \\
	&\leq \|p_m - p_{m+1}\| + \|R_{\lambda} p_m - R_{\lambda} p_k\| \\
 &\leq \|p_m - p_{m+1}\| +\lambda \zeta\bigg[ \left(\frac{1}{\lambda}\right) \|p_m - p_k\|^b \|p_m - p_{m+1}\|^a\\
&\|p_k - p_{k+1}\|^c \big( \frac{1}{2} (\|p_m - p_{k+1}\| + \|p_k - p_{m+1}\|) \bigg)^{1-a-b-c} \bigg]
\end{align*}
\[
\|p_m - p_{k+1}\| \leq \|p_m - p_{m+1}\| + \lambda \zeta\left(\frac{\epsilon}{\lambda}\right).
\]

Also, we have, \(\|p_m - p_{k+1}\| \leq \epsilon  + \epsilon = 2\epsilon\).

Thus, \eqref{eq:3.6} holds for \(m = k + 1\), and \(\{p_m\}\) is a Cauchy sequence in \(G\).
 and hence it is convergent in the complete normed linear space $(G, \|\cdot\|)$.\\
Let us denote
\begin{equation}\label{eq:3.8}
s = \lim_{m\to\infty} p_m.
\end{equation}

Now it remains to show that $s$ is a  fixed point of $R_\lambda$. We have

\[
\|s - R_\lambda s\| \leq \|s - p_{m+1}\| + \|p_{m+1} - R_\lambda s\| = \|s- p_{m+1}\| + \|R_{\lambda} p_m- R_{\lambda} s\|
\]
\begin{equation*}
\begin{aligned}
\|R_{\lambda} p_m - & R_{\lambda} s)\|
\leq \lambda \zeta \bigg[ \left(\frac{1}{\lambda}\right) \|p_m - s\|^b \|p_m - R_{\lambda} p_m\|^a \|s - R_{\lambda} s\|^c  \\
& \bigg(\frac{1}{2} \left(\|p_m - R_{\lambda} s\| + \|s - R_{\lambda} p_m\| \right) \bigg)^{1-a-b-c}\bigg]
\end{aligned}
\end{equation*}

\begin{equation*}
\begin{aligned}
\|R_{\lambda} p_m - & R_{\lambda} s\| \leq \lambda  \bigg[ \left(\frac{1}{\lambda}\right) \|p_m - s\|^b \|p_m - R_{\lambda} p_m\|^a \|s - R_{\lambda} s\|^c  \\
& \bigg( \frac{1}{2} \left(\|p_m - R_{\lambda} s\| + \|s - R_{\lambda} p_m\| \right) \bigg)^{1-a-b-c}\bigg]
\end{aligned}
\end{equation*}

Now, as we take the limit as $m \to \infty$, we obtain $\|s - R_\lambda s\| = 0$, which implies that  $s = R_\lambda s$.
 So,
$s \in \text{Fix} (R_\lambda)$
and since $\text{Fix}(R) = \text{Fix}(R_{\lambda})$, claim (1) is proven.\\
Second claim hold  by (3.8)
The remaining case $k = 0$ is the same as $k \neq 0$
with the only difference that in this case $\lambda = 1$, and hence, we work with $R = R_1$ when the Krasnoselskij iteration (2.2) reduces to the simple Picard iteration

The case when $k = 0$ is identical to the case when $k \neq 0$, except for $\lambda = 1$. Consequently, we are operating with $R = R_1$.
 In this case the Krasnoselskij iteration (2.2) simplifies into the standard Picard iteration:"
\[p_{m+1} = R p_m, \quad m \geq 0.\]
\end{proof}

\begin{example}
Let $G$= $M_2(\mathbb{R})$ be the collection of $2 \times 2$ square matrices, and define the norm $\| \cdot \|$ on $M_2(\mathbb{R})$ as follows,
\[
\| A \| = \max \{ |a_{ij}| \; | \; a_{ij} \text{ belongs to } A \}.
\]
Clearly, ($G, \|.\|$) is complete normed linear space .\\
Define $\zeta : [0, \infty) \to [0, \infty)$ as $\zeta(t) = \frac{2}{3}t$.\\
Now, let $R : G \to G$ be a mapping defined as $R(A) = -\frac{1}{4}A$, where $G$ is the set of matrices $A$ where $A$ belongs to $M_2(\mathbb{R})$. Let $k = \frac{1}{4}$.
\begin{equation*}
\begin{aligned}
    \|  &(k(p-q) + Rp - Rq) \| 
    \leq \zeta \bigg[ \| (k+1)(p - q) \|^{b} \|p - Rp\|^{a} \|q - Rq\|^{c} \\
   & \bigg( \frac{1}{2} \left( \| (k+1)(p-q )+ q - Rq\| + \| (k+1)(q-p) + p - Rp\| \right) \bigg)^{1-a-b-c} \bigg]
\end{aligned}
\end{equation*}
As $\| (k(p-q) + R(p)- R(q)) \| = 0$ for all $p, q\in M_2(\mathbb{R})$, \\it is evident that $R$ is an enriched interpolative Matkowski-type mapping.\\
Therefore,
by the fixed-point theorem, it has a fixed point as 
$\begin{bmatrix}
0 & 0 \\
0 & 0 \\
\end{bmatrix}$
\end{example}

\begin{corollary} Let $(G, \|\cdot\|)$ be complete normed linear space. We say that the self-mapping $R : G \to G$ is a self-mapping if there exist $a, b, c \in (0, 1)$ such that $a+b+c<1$ and $M \in [0, 1)$, $k \in [0, +\infty)$ such that\\
\[
\begin{aligned}
\| k(p-q) + Rp - Rq \| 
\leq M\bigg[ \| (k+1)(p - q)\|^b \|p - Rp\|^a \|q - Rq\|^c \\
 \bigg( \frac{1}{2} ( \| (k+1)(p-q) + q - Rq\| + \| (k+1)(q-p) + p - Rp\|)  \bigg) ^{1-a-b-c}\bigg]\\
     \forall p, q \in G \setminus \text{Fix}(R)
\end{aligned}
\]
Then $R$ has a fixed point in $G$.
\end{corollary}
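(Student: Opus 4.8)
The plan is to obtain this corollary as an immediate specialization of the preceding Theorem, by exhibiting a single comparison function $\zeta$ for which the enriched interpolative Matkowski condition (3.1) reduces exactly to the hypothesis stated here. The only substantive content is therefore the identification of the constant multiplier $M$ with a suitable element of the class $\mathrm{Z}$.

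First I would define $\zeta \colon [0,\infty) \to [0,\infty)$ by $\zeta(t) = M t$, where $M \in [0,1)$ is the constant appearing in the corollary. The key step is to check that this $\zeta$ belongs to the class $\mathrm{Z}$. Since $M \geq 0$, the map $t \mapsto M t$ is non-decreasing, so the first defining property holds. For the second property, note that $\zeta^{n}(t) = M^{n} t$ for every $n$, and because $0 \le M < 1$ we have $M^{n} \to 0$; hence $\lim_{n \to \infty} \zeta^{n}(t) = 0$ for all $t > 0$. Thus $\zeta \in \mathrm{Z}$.

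With this choice, the inequality assumed in the corollary is precisely the enriched interpolative Matkowski condition (3.1) with comparison function $\zeta(t) = M t$. In other words, $R$ is an enriched interpolative Matkowski mapping for these $a, b, c, k$ and this $\zeta$. Applying the preceding Theorem therefore yields a fixed point $s \in G$ of $R$, which moreover can be approximated by the Krasnoselskij iteration (3.2) with $\lambda = \tfrac{1}{k+1}$ when $k > 0$ and with $\lambda = 1$ when $k = 0$.

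The argument involves no real obstacle beyond the verification that the linear function $\zeta(t) = M t$ genuinely lies in $\mathrm{Z}$; once this is confirmed, the conclusion is a direct appeal to the Theorem, so no new estimates or iteration analysis are required. If one wished to make the statement fully self-contained one could instead repeat the Cauchy-sequence argument with $\zeta^{m}$ replaced by $M^{m}$, but invoking the Theorem is both shorter and cleaner.
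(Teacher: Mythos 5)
Your proposal is correct and is exactly the intended derivation: the corollary follows from Theorem 3.2 by taking $\zeta(t) = Mt$, which lies in $\mathrm{Z}$ since it is non-decreasing and $\zeta^{n}(t) = M^{n}t \to 0$ for $M \in [0,1)$. The paper states this corollary without a separate proof, treating it precisely as this immediate specialization, so your argument matches the paper's approach.
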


\begin{corollary} Let $(G, \|\cdot\|)$ be a complete normed linear space. We say that the self-mapping $R: G \to G$  satisfy the condition of definition 2.2, Then $R$ has a fixed point in $G$.
\end{corollary}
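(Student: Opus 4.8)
The plan is to recognize the final corollary as an immediate specialization of Theorem 3.2, so no new analysis is needed. The entire argument rests on one structural observation: the defining inequality of the enriched interpolative Matkowski mapping in Definition 3.1 collapses exactly onto the defining inequality of Definition 2.2 once the enrichment parameter $k$ is set to $0$. Thus the strategy is to verify this reduction and then invoke Theorem 3.2 verbatim.

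First I would substitute $k = 0$ into the contraction inequality of Definition 3.1 and simplify term by term. Since $k+1 = 1$, the left-hand side $\| k(p-q) + Rp - Rq \|$ becomes simply $\|Rp - Rq\|$, and the leading factor $\|(k+1)(p-q)\|^b$ becomes $\|p-q\|^b$. The two averaged terms simplify via the identities $(k+1)(p-q) + q - Rq = p - Rq$ and $(k+1)(q-p) + p - Rp = q - Rp$. After these substitutions the right-hand side reads
\[
\zeta\bigg[ \|p-q\|^b \|p-Rp\|^a \|q-Rq\|^c \bigg(\tfrac{1}{2}\big(\|p-Rq\| + \|q-Rp\|\big)\bigg)^{1-a-b-c}\bigg],
\]
which is precisely the right-hand side of the inequality in Definition 2.2, with the domain restriction $p,q \in G \setminus \mathrm{Fix}(R)$ preserved.

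Consequently, any mapping $R$ satisfying Definition 2.2 with constants $a,b,c$ and comparison function $\zeta$ also satisfies Definition 3.1 with the same $a,b,c,\zeta$ and the choice $k = 0$; in other words, $R$ is an enriched interpolative Matkowski mapping. Theorem 3.2 then applies directly and produces a fixed point of $R$ in $G$, establishing the corollary. In the language of the proof of Theorem 3.2, the value $k = 0$ corresponds to $\lambda = 1$, so the Krasnoselskij scheme (3.2) degenerates into the Picard iteration $p_{m+1} = R p_m$, which converges to the fixed point $s$.

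I do not expect any genuine obstacle here. The only point to check is the elementary algebraic identity showing that the $k = 0$ instance of Definition 3.1 coincides with Definition 2.2, and this is immediate. The corollary is therefore nothing more than the statement that interpolative Matkowski mappings constitute the $k = 0$ subclass of enriched interpolative Matkowski mappings, to which the main theorem already applies.
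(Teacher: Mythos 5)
Your proposal is correct and matches the paper's intended argument exactly: the paper treats this corollary as the $k=0$ specialization of Theorem 3.2, noting (as in its neighboring corollary) that with $k=0$ the inequality of Definition 3.1 reduces verbatim to that of Definition 2.2, so that $\lambda = 1$ and the Krasnoselskij scheme degenerates to the Picard iteration. Your term-by-term verification of the identities $(k+1)(p-q)+q-Rq = p-Rq$ and $(k+1)(q-p)+p-Rp = q-Rp$ is precisely the reduction the paper relies on.
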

\vspace{0.2cm}

\begin{corollary}
Any enriched interpolative Matkowski  mapping R satisfying 
with k=0 is also a  interpolative matkowski mapping . If k = 0 then from (3.1) we obtain the original interpolative matkowski type
mapping (2.1). Hence, any interpolative matkowski mapping is a (0, a,b,c)-enriched interpolative matkowski mapping.
\end{corollary}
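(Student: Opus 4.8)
The plan is to verify the claim by direct substitution of $k=0$ into the defining inequality (3.1) of an enriched interpolative Matkowski mapping and to observe that every term collapses, term by term, onto the corresponding term of the interpolative Matkowski inequality \eqref{2}. Since no analytic estimate is involved—only algebraic simplification of the norm arguments—the argument is purely computational, and the only care needed is in tracking the coefficient $(k+1)$ and the linear combinations appearing inside each norm.

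First I would treat the left-hand side. Setting $k=0$ in $\|k(p-q)+Rp-Rq\|$ immediately yields $\|Rp-Rq\|$, which is exactly the left-hand side of \eqref{2}. Next I would simplify the three interpolative factors on the right: the coefficient $(k+1)$ becomes $1$, so $\|(k+1)(p-q)\|^{b}$ reduces to $\|p-q\|^{b}$, while $\|p-Rp\|^{a}$ and $\|q-Rq\|^{c}$ are already independent of $k$ and hence unchanged.

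Then I would handle the bracketed average raised to the power $1-a-b-c$. With $k=0$ the two norm arguments simplify as
\[
\|(k+1)(p-q)+q-Rq\| \;=\; \|(p-q)+q-Rq\| \;=\; \|p-Rq\|,
\]
\[
\|(k+1)(q-p)+p-Rp\| \;=\; \|(q-p)+p-Rp\| \;=\; \|q-Rp\|,
\]
so the factor becomes $\bigl(\tfrac12(\|p-Rq\|+\|q-Rp\|)\bigr)^{1-a-b-c}$, matching \eqref{2} exactly. Assembling the three parts, the inequality (3.1) specialized to $k=0$ is term-by-term identical to \eqref{2}, which shows that $R$ is an interpolative Matkowski mapping with the same parameters $a,b,c$ and the same comparison function $\zeta$. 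Reading these equalities in reverse gives the converse inclusion, namely that any interpolative Matkowski mapping satisfies (3.1) with $k=0$ and is therefore a $(0,a,b,c)$-enriched interpolative Matkowski mapping.

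There is no genuine obstacle here; the single point requiring attention is confirming the two cancellations $(p-q)+q=p$ and $(q-p)+p=q$ inside the averaged term, which hold trivially and are what make the bracketed expressions in (3.1) collapse onto the terms $\|p-Rq\|$ and $\|q-Rp\|$ of \eqref{2}.
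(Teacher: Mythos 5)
Your proposal is correct and takes essentially the same approach as the paper: the paper's own justification is exactly the direct substitution of $k=0$ into (3.1), under which the coefficient $(k+1)$ becomes $1$ and the cancellations $(p-q)+q-Rq=p-Rq$ and $(q-p)+p-Rp=q-Rp$ collapse the inequality term by term onto (2.1). Your write-up simply makes these algebraic steps (and the trivial converse reading) explicit.
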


\begin{remark}
	 It is clear that the enriched interpolative matkowski mapping (3.1) implies (2.1)
but the reverse is not true as shown by the next example implies (4.1)
\end{remark} 

\begin{example}
Let $G = \mathbb{R}^3$ and $R: \mathbb{R}^3 \to \mathbb{R}^3$ such that $R(z) = -\frac{1}{2} z$ where $z = (p, q, r)$, $k = \frac{1}{2}$, 
define $\zeta \colon [0, \infty) \rightarrow [0, \infty)$ such that $ \zeta(t) = \frac{t}{14}$ .
 and $\|.\| = |p| + |q| + |r|$. Clearly, it satisfies
\begin{equation*}
\begin{aligned}
    \| &k(x-y) +  Rx - Ry) \| \leq \zeta \bigg[ \| (k+1)(x - y) \|^{b} \|x - Rx\|^{a} \|y - Ry\|^{c} \\
    & \bigg( \frac{1}{2} \left( \| (k+1)(y-x) + x - Rx\| + \| (k+1)(x-y) + y - Ry\| \right) \bigg)^{1-a-b-c} \bigg]
\end{aligned}
\end{equation*}

as $\| (k(x-y) + Rx - Ry) \| = 0$ for all $x, y \in \mathbb{R}^3$ So clearly $R$ is an enriched interpolative Matkowski-type mapping and hence has a  fixed point as $(0,0,0)$.

However, the inequality does not satisfy the condition of interpolative matkowski mapping for all cases. \\For instance,
Let $b= \frac{1}{2}$, $a = c = \frac{1}{8}$, $x = (2, 2, 2)$, $y = (-2, -2, -2)$,\\ $Rx = (-1, -1, -1)$, and $Ry = (1, 1, 1)$.\\ 
We have $\|Rx - Ry\| = 6$ but 
\newpage
\begin{equation*}
\begin{aligned}
 \zeta \bigg( \| x - y \|^{b} \|x - Rx\|^{a} \|y - Ry\|^{c} 
 \left[ \frac{1}{2} \left( \| x - Ry\| + \| y - Rx\| \right) \right]^{1-a-b-c} \bigg) \\
  = \zeta([3.464][1.316][1.316][1.316])\\
  = \zeta(13.67664) =
 0.9769
 \\
\end{aligned}
\end{equation*}
So, the example given above satisfies the condition of an enriched interpolative Matkowski mapping, but it does not satisfy the condition of an interpolative Matkowski mapping. Therefore, the class of interpolative Matkowski mappings is strictly contained within the class of enriched interpolative Matkowski mapping.
\end{example}
\subsection{figure}
\begin{figure}[ht!]    \includegraphics[width=1\textwidth]{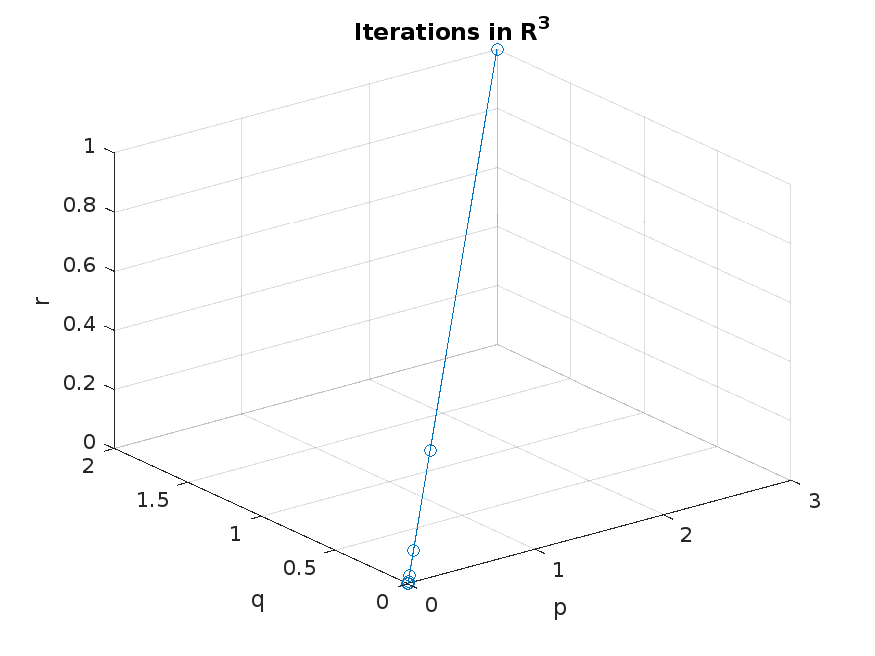}
    \caption{ Krasnoselskij Iteration for $T: \mathbb{R}^3 \to \mathbb{R}^3$ with $T(p, q, r) = -\frac{1}{2}(p, q, r)$}
    \label{fig:enter-label}
\end{figure}
\begin{table}[ht]
    \centering
    \caption{Results of the numerical experiments for $R (p,q,r) = -\frac{1}{2} (p,q,r)$
}
    \label{tab:data}
    \begin{tabular}{|c|c|c|c|}
    \hline
    \textbf{n} & \textbf{p} & \textbf{q} & \textbf{r} \\
    \hline
     0 & 3 & 2 & 1 \\
        1 & 0.75 & 0.5 & 0.25 \\
        2 & 0.1875 & 0.125 & 0.0625 \\
        3 & 0.046875 & 0.03125 & 0.015625 \\
        4 & 0.011719 & 0.0078125 & 0.0039062 \\
        5 & 0.0029297 & 0.0019531 & 0.00097656 \\
        6 & 0.00073242 & 0.00048828 & 0.00024414 \\
        7 & 0.00018311 & 0.00012207 & 6.1035e-05 \\
        8 & 4.5776e-05 & 3.0518e-05 & 1.5259e-05 \\
        9 & 1.1444e-05 & 7.6294e-06 & 3.8147e-06 \\
        10 & 2.861e-06 & 1.9073e-06 & 9.5367e-07 \\

    \hline
    \end{tabular}
\end{table}
In the above graph, we have denoted the Krasnoselskij Iteration in $\mathbb{R}^3$.\\ The sequence of iteration converges towards the fixed point $(0,0,0)$ for a particular value of $\lambda$. We have taken $\lambda = 0.5$ and $n=10$ iterations.\\
\newpage
\section{Numerical Experiments}
This section presents numerical experiments to demonstrate the authority of the Krasnoselskij algorithm,
\begin{equation}\label{eq:5.1}
p_{n+1} = Tp_n := (1 - \lambda)p_n + \lambda T(p_n)
\end{equation}
over the standard method of successive approximations,
\begin{equation}\label{eq:5.2}
q_{n+1} = Tq_n
\end{equation}
We shall write a very interesting fact about the effect of the control parameter. We shall perform iteration for different values of the parameter $\lambda$ on the rate of convergence of the sequence $x_n$ produced by the Krasnoselskij algorithm when analyzing the numerical tests presented in the following: The sequence $x_n$ converges towards the fixed point for different different  parameter $\lambda$.
\subsection{Example}
\begin{example}
\begin{figure}[ht!]
\includegraphics[width=1\textwidth]{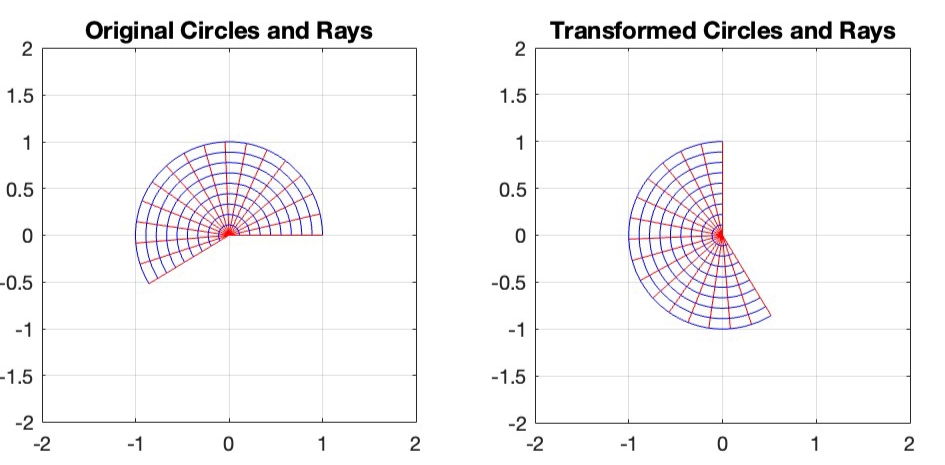}
\ \caption{ Domain and Range of Rotation map under the map $R:(x,y)=(-y,x)$} 
    \label{fig:enter-label}
\end{figure}
Take the example $R : D^2 \to D^2$,where $D^2$ denotes the closed unit ball in $\mathbb{R}^2$,Define as
\begin{equation}\label{eq:5.1}
R(re^{i\theta}) = re^{i(\theta + \frac{\pi}{2})} \
\end{equation} ,where $0 \leq \theta < 2\pi$.\\
Define $\|\cdot\|$ as the Euclidean norm on $D^2$. clearly 
(G,$\|\cdot\|$) is complete normed linear space.also we can define $R:(x,y)=(-y,x)$, where $x, y$ belong to $D^2$ where $D^2$ is the closed unit disc.\\
Furthermore, the Picard iteration ${y_n}$, defined by equation (4.2), exhibits non-convergence for any initial value $y_0$ distinct from the unique fixed point.
In contrast, the Krasnoselskij algorithm ${x_n}$, defined by equation (4.1), converges for all $\lambda \in (0, 1)$ and any initial value $x_0 \in G$.\\
In the figure given below, we have shown the rotation of an arc by the angle $\frac{\pi}{2}$ in the complex plane.\\
\begin{table}[h]
\centering
\caption{Results of the numerical experiments for  $R:(x,y)=(-y,x)$}
\begin{tabular}{|c|c|c|c|c|}
\hline
$n$ & $\lambda=0.1$ & $\lambda=0.2$ & $\lambda=0.3$ & $\lambda=0.4$   \\
\hline
\hline
0 & (0.5,1) & (0.5, 1) & (0.5, 1) & (0.5, 1) \\
1 & (0.35, 0.95) & (0.2, 0.9) & (0.05, 0.85) & (-0.1, 0.8) \\
2 & (0.22, 0.89) & (-0.02, 0.76) & (-0.22, 0.61) & (-0.38, 0.44) \\
3 & (0.10, 0.82) & (-0.16, 0.60) & (-0.33, 0.36) & (-0.40, 0.11) \\
4 & (0.01, 0.75) & (-0.25, 0.45) & (-0.34, 0.15) & (-0.28, -0.09) \\
5 & (-0.06, 0.67) & (-0.29, 0.30) & (-0.28, 0.002) & (-0.13, -0.17) \\
6 & (-0.12, 0.60) & (-0.29, 0.18) & (-0.20, -0.08) & (-0.01, -0.15) \\
7 & (-0.17, 0.53) & (-0.27, 0.09) & (-0.11, -0.11) & (0.05, -0.09) \\
8 & (-0.20, 0.46) & (-0.23, 0.01) & (-0.04, -0.11) & (0.07, -0.037) \\
9 & (-0.23, 0.39) & (-0.19, -0.03) & (0.003, -0.09) & (0.05, 0.006) \\
10 & (-0.25, 0.33) & (-0.14, -0.06) & (0.03, -0.06) & (0.03, 0.027) \\
\hline
\end{tabular}
\end{table}
\end{example}
In the table presented above, we illustrate the convergence behavior of the Krasnoselskij Iteration under the map $R: (p,q) \mapsto (-q,p)$ for various values of the control parameter $\lambda$. Specifically, we consider $\lambda =0.1, 0.2, 0.3, 0.4$. The results are obtained using an initial value of $(p_0, q_0) = (0.5,1)$.
Below, we provide a visual representation of the iterations corresponding to the parameter values $\lambda =0.1, 0.2, 0.3, 0.4 $ for $n$=$20$ iterations.

\begin{figure}[ht!]
    
    \includegraphics[width=1\textwidth]{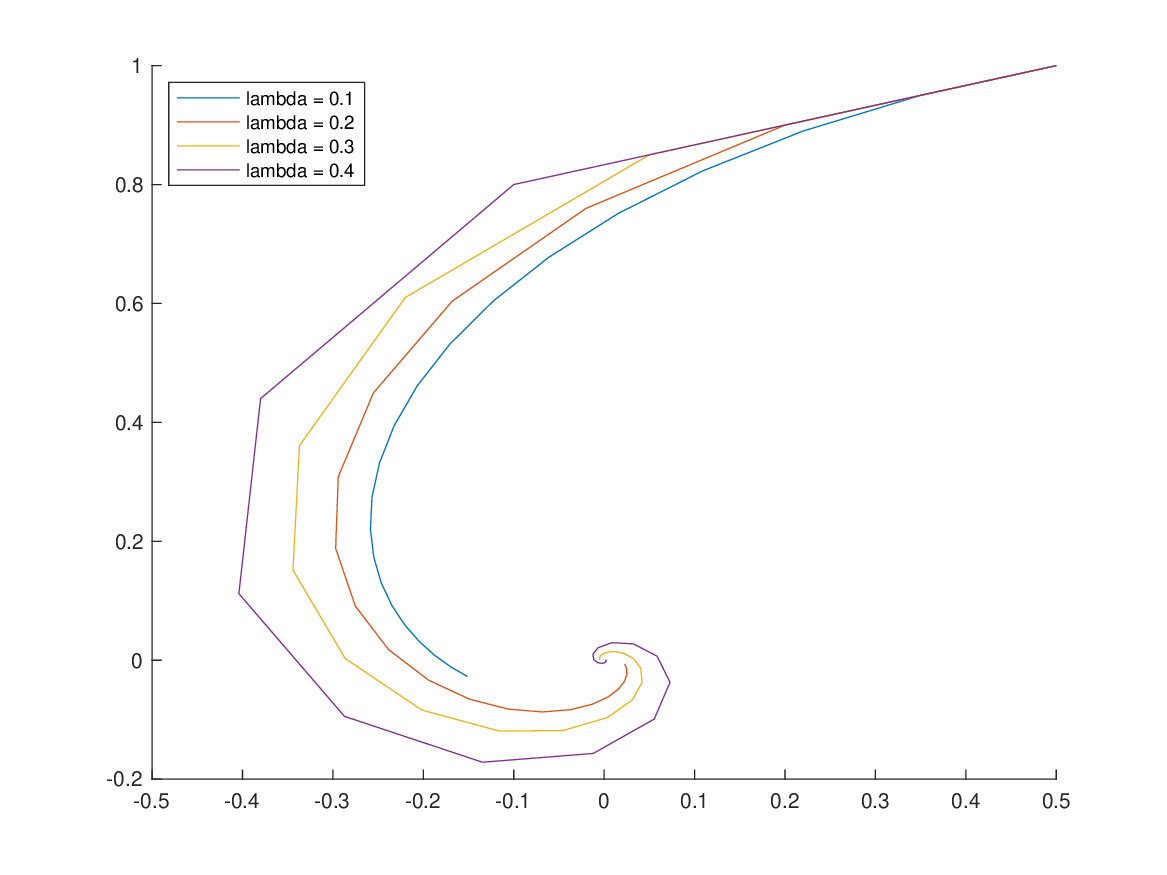}
    \caption{Convergence of iteration in $\mathbb{D}^2$
}
    \label{fig:enter-label}
\end{figure}

\subsection{Theorem}
\begin{theorem}
Let $(G, \|\cdot\|)$ be a complete normed linear space. Self-mappings $R, S: G \to G$ with the condition  
\begin{equation}
	\begin{aligned}
		\| k(p-q) + Rp - Sq\| \leq \zeta \bigg[ \| (k+1)(p - q) \|^b \|p - Rp\|^a \|q - Sq\|^c \\
	 \bigg( \frac{1}{2} ( \| (k+1)(p-q) + q - Sq\| + \| (k+1)(q-p) + p - Rp\|)\bigg) ^{1-a-b-c} \bigg]
	\end{aligned}
\end{equation}
for all $p, q \in G \setminus \text{Fix}(R),\text{Fix}(S)$
where $a, b, c \in (0, 1)$ satisfying $a + b + c < 1$, $k \in [0, +\infty)$, and $\zeta \in \mathrm{Z}$ then $R$ and $S$ have a common fixed point $s$.
\end{theorem}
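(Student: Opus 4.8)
The plan is to reduce the common fixed point problem for the pair $R,S$ to one for their averaged counterparts, mirroring the passage from condition (3.1) to (3.3) in the single-map theorem, and then run an alternating Krasnoselskij (Jungck-type) iteration. First I would dispose of the case $k>0$ by setting $\lambda=\tfrac{1}{k+1}\in(0,1)$, so that $k+1=\tfrac1\lambda$ and $k=\tfrac{1-\lambda}{\lambda}$, and introduce $R_\lambda p=(1-\lambda)p+\lambda Rp$ and $S_\lambda q=(1-\lambda)q+\lambda Sq$. The algebraic identities $R_\lambda p-S_\lambda q=\lambda\big(k(p-q)+Rp-Sq\big)$, $p-R_\lambda p=\lambda(p-Rp)$, $q-S_\lambda q=\lambda(q-Sq)$, together with the collapses $(k+1)(p-q)+q-Sq=\tfrac1\lambda(p-S_\lambda q)$ and $(k+1)(q-p)+p-Rp=\tfrac1\lambda(q-R_\lambda p)$, convert the contractive hypothesis into the two-map inequality
\[
\|R_\lambda p-S_\lambda q\|\le\lambda\,\zeta\!\left[\tfrac{1}{\lambda}\,\|p-q\|^{b}\,\|p-R_\lambda p\|^{a}\,\|q-S_\lambda q\|^{c}\Big(\tfrac12\big(\|p-S_\lambda q\|+\|q-R_\lambda p\|\big)\Big)^{1-a-b-c}\right],
\]
after the factor $(1/\lambda)^{a+b+c}(1/\lambda)^{1-a-b-c}=1/\lambda$ cancels exactly as in the proof of the single-map theorem.

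Next I would fix $p_0\in G$ and define the alternating iteration $p_{2m+1}=R_\lambda p_{2m}$, $p_{2m+2}=S_\lambda p_{2m+1}$ (assuming, as before, that no iterate is a fixed point, since otherwise the conclusion is immediate). Writing $d_m=\|p_m-p_{m+1}\|$, I would apply the displayed inequality with $(p,q)=(p_{2m},p_{2m+1})$ on odd steps and with $(p,q)=(p_{2m+2},p_{2m+1})$ on even steps. In each case exactly one of the mixed distances $\|q-R_\lambda p\|$ or $\|p-S_\lambda q\|$ vanishes, because one point is the $R_\lambda$- or $S_\lambda$-image of the other, while the surviving averaging term is controlled by the triangle inequality by $\tfrac12(d_{2m}+d_{2m+1})$, respectively $\tfrac12(d_{2m+1}+d_{2m+2})$. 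Supposing $d_m\le d_{m+1}$ and using that $\zeta$ is non-decreasing with $\zeta(t)<t$ (Lemma 2.1) yields the contradiction $d_{m+1}<d_{m+1}$, so $(d_m)$ is strictly decreasing; collapsing the exponents $a+b+c$ and $1-a-b-c$ as in (3.4)–(3.5) then gives $d_{m+1}\le\lambda\,\zeta\!\big(\tfrac1\lambda d_m\big)$, whence by induction $d_m\le\lambda\,\zeta^{m}\!\big(\tfrac1\lambda d_0\big)\to0$ since $\zeta\in\mathrm{Z}$. The Cauchy estimate and the convergence $p_m\to s$ then follow verbatim from the single-map proof.

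To identify $s$ as a common fixed point I would pass to the limit along each parity. Applying the inequality with $(p,q)=(p_{2m},s)$ bounds $\|p_{2m+1}-S_\lambda s\|$ by an expression containing the factor $\|p_{2m}-s\|^{b}\to0$; since $\zeta(t)\le t$, the whole right-hand side tends to $0$, giving $S_\lambda s=\lim p_{2m+1}=s$. Symmetrically, $(p,q)=(s,p_{2m+1})$ bounds $\|R_\lambda s-p_{2m+2}\|$ through the factor $\|s-p_{2m+1}\|^{b}\to0$, giving $R_\lambda s=\lim p_{2m+2}=s$. Hence $s\in\mathrm{Fix}(R_\lambda)\cap\mathrm{Fix}(S_\lambda)$, and by the averaged-mapping remark ($\mathrm{Fix}(R_\lambda)=\mathrm{Fix}(R)$, $\mathrm{Fix}(S_\lambda)=\mathrm{Fix}(S)$) the point $s$ is a common fixed point of $R$ and $S$. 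The case $k=0$ is the degenerate instance $\lambda=1$, where $R_\lambda=R$, $S_\lambda=S$, and the alternating scheme reduces to the classical Jungck/Picard iteration $p_{2m+1}=Rp_{2m}$, $p_{2m+2}=Sp_{2m+1}$.

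The main obstacle I anticipate is not the convergence machinery, which parallels the single-map theorem, but the \emph{asymmetry} of the hypothesis in $R$ and $S$: one must choose the parities of the iteration and the test pairs $(p,q)$ so that at every step exactly one mixed distance $\|p-S_\lambda q\|,\|q-R_\lambda p\|$ collapses to zero while the other is dominated by consecutive differences, and simultaneously so that the exponentiated factors $\|p-R_\lambda p\|^{a}$ and $\|q-S_\lambda q\|^{c}$ land on already-estimated consecutive gaps in both the even and odd cases. Verifying that these bookkeeping choices produce the clean recursion $d_{m+1}\le\lambda\,\zeta(\tfrac1\lambda d_m)$ in both parities is the delicate point; once it is in place, the remainder is routine.
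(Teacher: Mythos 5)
Your proposal follows essentially the same route as the paper's own proof: the reduction via $\lambda=\tfrac{1}{k+1}$ to the averaged maps $R_\lambda,S_\lambda$, the alternating iteration $p_{2m+1}=R_\lambda p_{2m}$, $p_{2m+2}=S_\lambda p_{2m+1}$, the contradiction argument with $\zeta(t)<t$ to get decreasing consecutive differences, the recursion $d_{m+1}\le\lambda\,\zeta\!\left(\tfrac{1}{\lambda}d_m\right)$ driving $d_m\to 0$, the Cauchy/limit step, the parity-wise identification of $s$ as a fixed point of both $R_\lambda$ and $S_\lambda$, and the appeal to $\mathrm{Fix}(R_\lambda)=\mathrm{Fix}(R)$, $\mathrm{Fix}(S_\lambda)=\mathrm{Fix}(S)$, with $k=0$ handled as the degenerate case $\lambda=1$. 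If anything, your treatment is slightly more careful than the paper's, since you verify the recursion in both parities (the paper writes out only the odd step explicitly), but this stays within the same argument.
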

\begin{proof}
		If $k > 0$ in (4.4), we can choose $\lambda$ to be $\frac{1}{k + 1}$. It's evident that $0 < \lambda < 1$
		Consequently, equation (4.4) transforms into.
		\begin{equation*}
			\begin{aligned}
				\|& \bigg(\frac{1}{\lambda}-1\bigg)  (p-q) + Rp - Sq \| 
				\leq \zeta\bigg[ \| \frac{1}{\lambda}p - q\|^b \|p - Rp\|^a \|q - Sq\|^c \\
				& \bigg( \frac{1}{2} (\| \left(\frac{1}{\lambda}\right)(p-q) + q - Sq\| + \| \left(\frac{1}{\lambda}\right)(q-p) + p - Rp\|)  \bigg) ^{1-a-b-c}\bigg]
			\end{aligned}
		\end{equation*}
		\begin{equation*}
			\begin{aligned}
				\|(1-\lambda)&(p-q)  +\lambda( Rp - Sq) \| \\
				&\leq \lambda\zeta\bigg[ (\frac{1}{\lambda})^{a+b+c}\|p - q\|^b \|\lambda (p - Rp)\|^a\|\lambda(q - Sq)\|^c \\
				& \bigg( \frac{1}{2} \bigg( \| (1/\lambda)(p-q) + q - Sq\| + \| (1/\lambda)(q-p) + p - Rp\|)  \bigg) ^{1-a-b-c}\bigg]
			\end{aligned}
		\end{equation*}
		\begin{equation*}
			\begin{aligned}
				&\|(1-\lambda)p + \lambda R(p) - (1-\lambda)q - \lambda S(q)\| \\
				&\leq \lambda\zeta\bigg[\left(\frac{1}{\lambda}\right)\| (p - q) \|^b\|(p - (1-\lambda)p - \lambda R(p))\|^a
				\| (q - (1-\lambda)q - \lambda S(q))\|^c \\
				&\bigg(\frac{1}{2}(\|p - (1-\lambda)q - \lambda S(q)\| + \|q - (1-\lambda)p - \lambda R(p)\|)\bigg)^{1-a-b-c}\bigg]
			\end{aligned}
		\end{equation*}

		\begin{equation}\label{3.3}
			\begin{aligned}
				\|((R_{\lambda} (p)- & S_{\lambda }(q))\|
				\leq \lambda\zeta\bigg[ \| \bigg(\frac{1}{\lambda}\bigg)\|(p - q)\|^{b} \|p-R_\lambda(p)\|^{a}
				\|q -S_\lambda (q))\|^{c}\\
				&\bigg( \frac{1}{2} ( \| p- S_\lambda(q)\| + 
					\|q - R_\lambda(p)\|)  \ ^{1-a-b-c}\bigg)\bigg]
			\end{aligned}
		\end{equation}\\
		Let $p_0 \in G$, define the sequence $p_m$ by
		\[
		p_{2m+1} = T_{\lambda}(p_{2m}), \quad p_{2m+2} = S_{\lambda}(p_{2m+1})
		\] 
		for all $m \in \mathbb{N} \cup \{0\}$.\\
		If $p_{2m} = p_{2m+1}=p_{2m+2}$ for some $m$, then $p_{2m}$ is common fixed point of $S_{\lambda}$ and $R_{\lambda}$.\\
		Suppose there do not exist three consecutive identical terms in the sequence \(p_m\).
		\begin{equation}\label{3.4}
			\begin{aligned}
				\|p_{2m+1} - p_{2m+2}\|&=\|R_{\lambda}{p_{2m}} -S_{\lambda}{p_{2m+1}}\|\\
				&\leq \lambda\zeta\bigg[\left(\frac{1}{\lambda}\right)\|(p_{2m} - p_{2m+1})\|^{b+c} \|p_{2m+1} - p_{2m+2}\|^a\\
				&\bigg( \frac{1}{2} ( \|p_{2m} - p_{2m+2}\| + \|p_{2m+1}- p_{2m+1}\|) \bigg) ^{1-a-b-c}\bigg]\ ,\\ 
				&\leq \lambda\zeta\bigg[ \left(\frac{1}{\lambda}\right)\|(p_{2m} - p_{2m+1})\|^{b+c} \|p_{2m+1} - p_{2m+2}\|^a\\
				&\bigg(\frac{1}{2} ( \|p_{2m} - p_{2m+1}\| + \|p_{2m+1}- p_{2m+2}\|)\bigg) ^{1-a-b-c}\bigg]\ , m \geq 1
			\end{aligned}
		\end{equation}

		Now we will show that $\|p_{2m+1} - p_{2m+2}\| \leq \|p_{2m}- p_{2m+1}\|$, $m \geq 1$. By contradiction, assume that \\$\|p_{2m}-p_{2m+1}\| \leq \|p_{2m+1}-p_{2m+2}\|$ for some $m$. \\
		Putting this into (4.6) we have
		
		$\|p_{2m+1} - p_{2m+2}\| \leq \lambda \zeta \bigg[ \left(\frac{1}{\lambda}\right) \|p_{2m+1} - p_{2m+2}\| \bigg]$\\
		
		since $\zeta$ is an increasing function.
		and also $\zeta(t) < t$ for all $t > 0$
		,\\  so we have $\|p_{2m+1} - p_{2m+2}\| < \lambda \left(\frac{1}{\lambda}\right) \|p_{2m+1} - p_{2m+2}\|$\\
		$\|p_{2m+1} - p_{2m+2}\| < \|p_{2m+1} - p_{2m+2}\|$ \\
		Hence, we arrive at a contradiction. Therefore, the sequence $\|p_{2m+1} - p_{2m+2}\|$ is decreasing. Hence sequence $\|p_{2m+1} - p_{2m+2}\|$ is convergent .
		So from (4.6), we have
		
		\begin{equation*}
			\begin{aligned}
				\|p_{2m+1} & - p_{2m+2}\| \leq \lambda \zeta \bigg[ \left(\frac{1}{\lambda}\right) \|p_{2m} - p_{2m+1}\|^{b + c+ a }\|p_{2m }- p_{2m+1}\|^{1-a-b-c} \bigg]
			\end{aligned}
		\end{equation*}
		\begin{equation*}
			\begin{aligned}
				\|p_{2m+2} & - p_{2m+1}\| \leq \lambda \zeta \bigg[(\frac{1}{\lambda})\|p_{2m }- p_{2m+1}\|\bigg]
			\end{aligned}
		\end{equation*}
		Similarly
		\begin{equation*}
			\begin{aligned}
				\|p_{2m+2} & - p_{2m+1}\| \leq \lambda \zeta \bigg[(\frac{1}{\lambda})\|p_{2m} - p_{2m+1}\|\bigg]
			\end{aligned}
		\end{equation*}
		
		On repeating the same argument, we get
		\begin{equation*}
			\begin{aligned}
				\|p_{2m+2} & - p_{2m+1}\| \leq \lambda \zeta^{2}\bigg[(\frac{1}{\lambda})\|p_{2m-1} - p_{2m+1}\|\bigg],m \geq 1
			\end{aligned}
		\end{equation*}\\
		\begin{equation*}
			\begin{aligned}
				\|p_{2m+1} & - p_{2m+2}\| \leq \lambda \zeta^{2m+1}(\left(\frac{1}{\lambda}\right)\|p_1 - p_{0}\|),m \geq 1
			\end{aligned}
		\end{equation*}\\
	
		To show that $p_m$ is Cauchy, let $m, k \in \mathbb{N}$.\\
		\begin{align*}
			\|p_m - p_{m+k}\| &\leq \|p_m - p_{m+1}\| + \|p_{m+1} - p_{m+2}\| + \cdots + \|p_{m+k-1} - p_{m+k}\| \\
			&\leq (\zeta^m + \zeta^{m+1} + \cdots + \zeta^{m+k-1}) \|p_0 - p_1\| = \sum_{i=m}^{m+k-1} \zeta^i \|p_0 - p_1\| \\
			&\leq \sum_{i=m}^{\infty} \zeta^i \|p_0 - p_1\|
		\end{align*}\\
		\begin{align*}
			\|&p_{m+n+k} - p_{m+k}\|\\ &\leq \|p_{m+n} - p_{m+n+1}\| + \|p_{m+n+1} - p_{m+n+2}\| + \cdots + \|p_{n+m+k-1} - p_{n+m+k}\| \\
			&\leq \sum_{i=n}^{\infty} \zeta^i \|p_{m} - p_{m+1}\|\\
			& =\lim_{n \to \infty} \sum_{i=n}^{\infty} \lim_{m \to \infty} \zeta^i \|p_{m} - p_{m+1}\|=0
		\end{align*}\\
		as $ \zeta^{m}{(t)} = 0 $\\
		$\lim_{m \to \infty} \|p_{m}- p_{m+k}\|=\lim_{m ,n\to \infty} \|p_{n+m}- p_{n+m+k}\|=0$\\
		\(\{p_m\}\) is a Cauchy sequence in \(G\).
		and hence it is convergent in the complete normed linear space $(G, \|\cdot\|)$.\\
		Let us denote
		\begin{equation}\label{eq:3.8}
			s = \lim_{m\to\infty} p_m.
		\end{equation}
		
		Now it remains to show that $s$ is a   common fixed point of fixed point of $S_{\lambda}$ and $R_{\lambda}$. We have

		\begin{equation*}
			\begin{aligned}
				\|R_{\lambda}s-  p_{2m+2}\|& =\|R_{\lambda}s-S_{\lambda}{p_{2m+1}}\|\\
				&\leq \lambda \zeta \bigg[ \left(\frac{1}{\lambda}\right) \|p_{2m+1} - s\|^b \|s- R_{\lambda}s\|^a \|p_{2m+1} - p_{2m+2}\|^c  \\
				& \bigg( \frac{1}{2} (\|s- p_{2m+2} \| + \|p_{2m+1} - R_{\lambda}s\|) \bigg)^{1-a-b-c}\bigg],
			\end{aligned}
		\end{equation*}

		Let \( m \) tend to infinity and, given that \(\|\cdot\|\) is a continuous function, we have \(\|R_{\lambda}u - u\| = 0\). Therefore, we conclude that \( R_{\lambda}s = s \).\\
		\begin{equation*}
			\begin{aligned}
				\|S_{\lambda}(s)-  p_{2m+1}\|& =\|R_{\lambda}p_{2m}-S_{\lambda}s\|\\
				&\leq  \zeta \bigg[ \left(\frac{1}{\lambda}\right) \|p_{2m} - s\|^b \|s - S_{\lambda}s\|^c \|p_{2m} - p_{2m+1}\|^a  \\
				&\bigg( \frac{1}{2} \left(\|S_{\lambda}s- p_{2m} \| + \|s - T_{\lambda}p_{2m}\| \right) \bigg)^{1-a-b-c}\bigg].
			\end{aligned}
		\end{equation*}\\
		Let \( m \) tend to infinity and, given that \(\|\cdot\|\) is a continuous function, we have \(\|S_{\lambda}s - s\| = 0\). Therefore, we conclude that \( S_{\lambda}s = s\).\\
		As \(s\) is a fixed point of \(R_{\lambda}\), \(s\) is also a fixed point of \(R\)\\
	   and also	\(s\) is a fixed point of \(S_{\lambda}\), \(s\) is also a fixed point of \(S\).
	  $R$ and $S$ have common fixed point.
	\end{proof}

	\section{Applications}
In this section, we will apply fixed point theory to address the split convex feasibility problem by using the enriched interpolative Matkowski mapping.
\begin{enumerate}
    \item $C, Q$ are nonempty, closed, and convex subsets of a real Hilbert space $\mathcal{H}$.
\item $\mathcal{H}$ is a real Hilbert space
.
\item $T: \mathcal{H} \rightarrow \mathcal{H}$ bounded linear  operator.
\item $T^*: \mathcal{H} \longrightarrow \mathcal{H}$ be the adjoint of $T$.
\item $P_c, P_Q$ denote  projection operators on $C$ and $Q$, respectively.
\item The Operator \(L: \mathcal{H} \rightarrow \mathcal{H}\) is defined as follows:
\begin{equation}\label{eq:6.1}
Lp = P_C\left(p + \frac{1}{\|T\|^2} T^*\cdot \left(P_Q(Tp) - Tp\right)\right).
\end{equation}
\end{enumerate}
\begin{theorem} \textbf{5.1}\cite{bib23,bib25}
Let \(C\) and \(Q\) be nonempty, closed, and convex subsets of real Hilbert spaces \(\mathcal{H}\). Let \(\mathcal{T}: \mathcal{H} \rightarrow \mathcal{H}\) be a bounded linear operator, and let \(T^*: \mathcal{H} \rightarrow \mathcal{H}\) be its adjoint. The split convex feasibility problem is formulated as follows:

Find an element \(x^* \in C\) such that \(Tx^* \in Q\).\\
\end{theorem}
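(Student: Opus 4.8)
The plan is to reduce the split convex feasibility problem to a fixed point problem for the operator $L$ of \eqref{eq:6.1} and then to invoke the existence-and-approximation result of Section 3. First I would establish the key equivalence: a point $x^*$ solves the problem, that is $x^* \in C$ and $Tx^* \in Q$, if and only if $x^* \in \text{Fix}(L)$. The forward implication is immediate, since $x^* \in C$ gives $P_C x^* = x^*$ while $Tx^* \in Q$ gives $P_Q(Tx^*) = Tx^*$, so the correction term $T^*\bigl(P_Q(Tx^*) - Tx^*\bigr)$ vanishes and $Lx^* = P_C x^* = x^*$. For the converse, assuming the problem is consistent, from $Lx^* = x^*$ one reads off $x^* \in C$, and a standard computation using the variational inequality characterizing the metric projection $P_C$ together with the firm nonexpansiveness of $P_Q$ forces $P_Q(Tx^*) = Tx^*$, hence $Tx^* \in Q$.

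The second step is to show that $L$ falls within the scope of the fixed point theorem of Section 3, that is, that $L$ is an enriched interpolative Matkowski mapping in the sense of (3.1). Here I would exploit the firm nonexpansiveness of the metric projections $P_C$ and $P_Q$ on a Hilbert space together with the choice of step size $1/\|T\|^2$, which makes the gradient-type operator $I + \tfrac{1}{\|T\|^2} T^*(P_Q - I)T$ averaged and hence nonexpansive. Since a composition of averaged operators is averaged, $L$ is itself averaged and therefore nonexpansive on $\mathcal{H}$. I would then verify the defining inequality (3.1) for a suitable comparison function $\zeta \in \mathrm{Z}$ and enrichment parameter $k$, so that $L$ qualifies as an enriched interpolative Matkowski mapping.

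Finally, applying the fixed point theorem of Section 3 to $L$ produces a fixed point $s$ and a parameter $\lambda \in (0,1]$ for which the Krasnoselskij iteration $p_{m+1} = (1-\lambda)p_m + \lambda L p_m$ converges to $s$ from any initial point $p_0$. By the equivalence of the first step, this limit $s$ is exactly a solution of the split convex feasibility problem, and the iteration furnishes a constructive scheme that approximates it.

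The main obstacle I anticipate lies in the second step. A generic averaged or nonexpansive operator need not be an enriched interpolative Matkowski mapping, because condition (3.1) encodes a genuine Matkowski-type contraction through the property $\zeta(t) < t$ for all $t > 0$, which is strictly stronger than nonexpansiveness. Closing this gap---either by imposing additional structure on $C$, $Q$, or $T$ that upgrades $L$ to a contraction on the relevant set, or by verifying (3.1) directly on $\mathcal{H} \setminus \text{Fix}(L)$ with an explicit choice of the exponents $a,b,c$, the parameter $k$, and the function $\zeta$---is the delicate part of the argument.
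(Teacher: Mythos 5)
There is a basic mismatch between what you set out to prove and what the statement actually asserts. The ``Theorem 5.1'' in question is not an existence theorem at all: it is merely the \emph{formulation} of the split convex feasibility problem, quoted from Censor--Elfving and Byrne, and accordingly the paper supplies no proof of it whatsoever. The existence-and-approximation result you are actually attempting is the paper's subsequent theorem in the Applications section, whose proof is two lines: it takes as a \emph{hypothesis} that the operator $L$ of (5.1)/(6.1) satisfies the enriched interpolative Matkowski inequality (6.2), applies the main fixed point theorem (Theorem 3.2) to get a fixed point of $L$, and then invokes Result 5.2 to identify fixed points of $L$ with solutions $x^* \in C$, $Tx^* \in Q$. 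Your first and third steps reproduce this reduction faithfully --- your fixed-point equivalence is exactly Result 5.2 --- but the paper deliberately does not attempt your second step.

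And that second step is not merely ``delicate,'' as you say; it is unfixable in the generality you propose, and this can be seen from the paper's own ingredients. The step size $1/\|T\|^2$ makes $L$ averaged, hence nonexpansive, and nothing more. Now suppose the split feasibility problem is inconsistent: then by Result 5.2 (which explicitly assumes the solution set $K$ is nonempty precisely to avoid this case) the operator $L$ has no fixed point. But any mapping satisfying condition (3.1) has a fixed point by Theorem 3.2, since Lemma 2.1 forces $\zeta(t) < t$ for all $t > 0$ and the Krasnoselskij iterates converge. Hence $L$ cannot satisfy (3.1) unconditionally, and an unconditional existence proof along your lines would establish something false --- the SCFP simply need not have a solution. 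There is also a circularity in your first step: your converse implication ``assuming the problem is consistent'' presupposes the very existence you are trying to derive. The rotation example in Section 4 (an isometry, nonexpansive but no contraction) further illustrates how wide the gap is between averagedness and any Matkowski-type condition. The honest repair is the one the paper makes: treat the statement as a problem formulation, and prove existence only under the postulated contraction hypothesis (6.2) on $L$.
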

\textbf{Result} \textbf{5.2}  \cite{bib26,bib27}
 If \(T: \mathcal{H} \rightarrow \mathcal{H}\) is a bounded linear operator and \(C\) and \(Q\) are closed and convex subsets of \(\mathcal{H}\), Let $K$ be the solution set of the Split Feasibility Problem. We assume that $K$ is nonempty. Then, for any $x^* \in C$, $x^*$ is a solution of Split Feasibility Problem if and only if it solves the following fixed-point equation 
\(
P_C \left[ I - \frac{1}{\|T\|^2}T^*(I - P_Q)T \right]x = x, \quad x \in C,
\)\\

\begin{theorem}
Consider the operator \(L\) defined in equation (\ref{eq:6.1}) satisfying the\\ condition
\begin{equation}\label{eq:6.2}
\begin{aligned}
\| & k(x-y)+  Lx-Ly\| \\
&\leq \zeta\bigg[ \| (k+1)(x - y)\|^b \|x - Lx\|^a \|y - Ly\|^c \\
&  \bigg
( \frac{1}{2} ( \| (k+1)(x-y) + y - Ly\| + \| (k+1)(y-x) + x- Lx\|) \bigg)^{1-a-b-c} \bigg] ,\\
& \forall x, y \in  \mathcal{H} \setminus \text{Fix}(L)
\end{aligned}
\end{equation}
where $a, b, c \in (0, 1)$ satisfying $a + b + c < 1$, $k \in [0, +\infty)$, and $\zeta \in \mathrm{Z}$ \
then there exist  \(x^* \in C\) such that \(Tx^* \in Q\).
\end{theorem}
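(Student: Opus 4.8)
The plan is to read hypothesis (6.2) as the defining inequality of an enriched interpolative Matkowski mapping (Definition 3.1), apply the existence theorem for such mappings proved in Section 3 to obtain a fixed point of $L$, and then invoke the fixed-point reformulation of the split feasibility problem recorded in Result 5.2 to identify that fixed point as a solution.

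First I would note that the real Hilbert space $\mathcal{H}$ is, in particular, a complete normed linear space, so the hypotheses of the Section~3 framework are met. The operator $L$ defined in (6.1) is a self-map of $\mathcal{H}$, and by assumption it satisfies inequality (6.2), which is exactly the inequality of Definition 3.1 with $R$ replaced by $L$ and the same constants $a,b,c\in(0,1)$, $k\in[0,\infty)$ and comparison function $\zeta\in\mathrm{Z}$. Hence $L$ is an enriched interpolative Matkowski mapping, and the existence theorem for such mappings guarantees a point $x^{*}\in\mathcal{H}$ with $Lx^{*}=x^{*}$; the associated Krasnoselskij iteration even furnishes a convergent scheme approximating $x^{*}$.

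The only genuine computation is to match the fixed-point equation $Lx^{*}=x^{*}$ with the equation appearing in Result 5.2. Starting from (6.1), I would use the elementary identity
\[
P_Q(Tp)-Tp=-(I-P_Q)Tp,
\]
which rewrites $L$ as
\[
Lp=P_C\!\left[\,I-\frac{1}{\|T\|^{2}}\,T^{*}(I-P_Q)T\,\right]p .
\]
Consequently the equation $Lx^{*}=x^{*}$ is literally the fixed-point equation
\[
P_C\!\left[\,I-\frac{1}{\|T\|^{2}}\,T^{*}(I-P_Q)T\,\right]x=x
\]
of Result 5.2.

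Finally I would apply Result 5.2: since $x^{*}$ solves this fixed-point equation and lies in $C$ (being in the range of the projection $P_C$), it is a solution of the split feasibility problem, that is, $x^{*}\in C$ and $Tx^{*}\in Q$, which is the desired conclusion. I expect essentially no obstacle beyond this bookkeeping: all the analytic difficulty is already absorbed into the Section~3 existence theorem and into Result 5.2. The one point to check carefully is that (6.2) genuinely places $L$ in the class covered by the Section~3 theorem (same $a,b,c,k,\zeta$), and that the membership $x^{*}\in C$ is automatic from the outer projection $P_C$ in the formula for $L$.
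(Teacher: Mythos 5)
Your proposal is correct and follows essentially the same route as the paper: the paper's proof likewise applies the Section~3 existence theorem (Theorem~3.2) to conclude that $L$ has a fixed point, and then invokes Result~5.2 to identify that fixed point with a solution of the split feasibility problem. Your explicit rewriting of $L$ via $P_Q(Tp)-Tp=-(I-P_Q)Tp$ to match the fixed-point equation of Result~5.2 is a small piece of bookkeeping the paper leaves implicit, but it is the same argument.
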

\begin{proof}
 Using Theorem (3.2), \(L\) has a  fixed point, and using Result (5.2), the fixed point of operator \(L\) is equivalent to finding \(x^* \in C\) such that \(Tx^* \in Q\).
\end{proof}
\section{Conclusion}
In this paper construction of enriched interpolative Matkowski mappings is defined, which encompasses interpolative Matkowski \cite{bib4} mappings as a particular case. It is proven that every enriched interpolative Matkowski mapping possesses a  fixed point that can be estimated using Krasnoselski iterations. Examples are provided to illustrate that the class of enriched interpolative Matkowski mappings  is a strict superset of interpolative Matkowski mappings\cite{bib4}. In other words, there exist mappings that do not belong to the set of interpolative Matkowski mappings but do belong to the set of enriched interpolative Matkowski mappings. In Section 5, numerical experiments are presented to complement our theoretical results.


\begin{thebibliography}{1}
\bibitem{bib1}
J. Matkowski.
\textit{Fixed point theorems for mappings with a contractive iterate at a point}.
\textit{Proceedings of the American Mathematical Society}, 62(2), 344–348, 1977.
\bibitem{bib2}
Hassen Aydi, Erdal Karapinar, Stojan Radenović.
\textit{Tripled coincidence fixed point results for Boyd-Wong and Matkowski type contractions}.
\textit{RACSAM (Revista de la Real Academia de Ciencias Exactas, Físicas y Naturales. Serie A. Matemáticas)}, 107, pp. 339-353, 2013.
\bibitem{bib3}
Erdal Karapinar, Bülent Samet.
\textit{Generalized $\alpha - \psi -$
	contractive type mappings and related fixed point theorems with applications}.
\textit{Abstract and Applied Analysis}, 2012, Article ID 793486, 17 pages.
\bibitem{bib4}
Erdal Karapinar, Hassen Aydi, Zoran D. Mitrovic.
\textit{On Interpolative Boyd-Wong and Matkowski Type Contractions}.
\textit{TWMS Journal of Pure and Applied Mathematics}, 11(2), 2020, pp. 204-212.
\bibitem{bib5}
S.~Banach, ``Sur les opérations dans les ensembles abstraits et leurs applications aux équations intégrales,'' \textit{Fundamenta Mathematicae}, vol. 3, pp. 133–181, 1922.

\bibitem{bib6}
R.~Caccioppoli, ``Un teorema generale sull’esistenza di elementi uniti in una trasformazione funzionale,'' \textit{Rendiconti della Accademia dei Lincei}, vol. 11, pp. 794–799, 1930.
\bibitem{bib7}
R. Kannan, "Some results on fixed points," \textit{Bulletin of the Calcutta Mathematical Society}, vol. 60, pp. 71–76, 1968.
\bibitem{bib8}
R. Kannan, "Some results on fixed points-II," \textit{The American Mathematical Monthly}, vol. 76, no. 4, pp. 405–408, 1969.
\bibitem{bib9}
E. Karapinar, "Revisiting the Kannan type contractions via interpolation," \textit{Advances in the Theory of Nonlinear Analysis and its Application}, vol. 2, no. 2, pp. 85–87, 2018.

\bibitem{bib11}
P. Debnath, Z.D. Mitrovic, and S. Radenovic,
``Interpolative Hardy-Rogers and Reich-Rus-Ciric type contractions in $b$-metric spaces and rectangular $b$-metric spaces,''
\emph{Mathematical Vesnik}, vol. 72, pp. 368–374, 2020.
\bibitem{bib12}
E. Karapınar, O. Alqahtani, H. Aydi, "On Interpolative Hardy-Rogers Type Contractions," Symmetry 11(1), 8 (2018).\\
\bibitem{bib13}
V. Berinde, "Iterative Approximation of Fixed Points," Springer, New York, 2007.
\bibitem{bib14}
E.Karapinar , Fulga, A., Shahzad, N.,  A.F. Roldán López de Hierro A.F. (2022). Solving Integral Equations by Means of Fixed Point Theory. Journal of Function Spaces.
\bibitem{bib15}
Y.U. Gaba , E. Karapınar, (2019). A New Approach to the Interpolative Contractions. Axioms, 8(4), 2075–2080.
\bibitem{bib16}
E. Karapınar,
\emph{Revisiting simulation functions via interpolative contractions},
\emph{Appl. Anal. Discrete Math.}, 
vol. 13, no. 3, pp. 859–870, 2019.
\bibitem{bib17}
E. Karapinar, H. Aydi, Z.D. Mitrovic,
On interpolative Boyd-Wong and Matkowski type contractions,
TWMS J. Pure.
\bibitem{bib18}
V. Berinde,
\emph{Approximating fixed points of enriched nonexpansive mappings by Krasnoselskij iteration in Hilbert spaces},
\emph{Carpathian J. Math.},
vol. 35, no. 3, pp. 293–304, 2019.
\bibitem{bib19}
V. Berinde,
\emph{Approximating fixed points of enriched nonexpansive mappings in Banach spaces by using a retraction-displacement condition},
\emph{Carpathian J. Math.},
vol. 36, no. 1, pp. 27–34, 2020.
\bibitem{bib20}
F. E. Browder,
``Nonexpansive nonlinear operators in a Banach space,''
\emph{Proceedings of the National Academy of Sciences of the United States of America},
vol. 54, no. 4, pp. 1041--1044,
1965.
\bibitem{bib21}
D. Göhde,
``Zum Prinzip der kontraktiven Abbildung,''
\emph{Mathematische Nachrichten},
vol. 30, no. 3-4, pp. 251--258,
1965.
\bibitem{bib22}
W. A. Kirk,
``A Fixed Point Theorem for Mappings Which Do Not Increase Distances,''
\emph{The American Mathematical Monthly},
vol. 72, no. 9, pp. 1004--1006,
1965.
\bibitem{bib23}
Y. Censor and T. Elfving,
"A multiprojection algorithm using Bregman projections in a product space,"
Numerical Algorithms, vol. 8, pp. 221–239, 1994.
\bibitem{bib15}
Y.U. Gaba,E. Karapınar , (2019). A New Approach to the Interpolative Contractions. Axioms, 8(4), 2075–2080.

\bibitem{bib24}
J.B. Baillon, R.E. Bruck, and S. Reich,
``On the Asymptotic Behavior of Nonexpansive Mappings and Semigroups in Banach Spaces,''
\emph{Houston J. Math.}, vol. 4, no. 1, pp. 1--9, 1978.
\bibitem{bib25}
C. Byrne,
"Iterative oblique projection onto convex sets and the split feasibility problem,"
Inverse Problems, vol. 18, pp. 441–453, 2002.


\bibitem{bib26}
H. Xu,
"A variable Krasnosel'skii-Mann algorithm and the multiple-set split feasibility problem,"
Inverse Problems, vol. 22, no. 6, pp. 2021–2034, 2006.
\bibitem{bib27}
H. Xu,
"Iterative methods for the split feasibility problem in infinite-dimensional Hilbert spaces,"
Inverse Problems, vol. 26, p. 105018, 2010, 17pp.
\bibitem{bib28}
J. Matkowski,
\emph{Integrable solutions of functional equations},
\textit{Dissertationes Mathematicae},
1975.

\end{thebibliography}
\end{document}